\newtheorem{remark}{Remark}
\newtheorem{lemma}{Lemma}
\newtheorem*{problem}{Problem}
\newtheorem{corollary}{Corollary}
\newtheorem{theorem}{Theorem}
\newtheorem*{thm}{Main Theorem}
\begin{document}
\title{Characterization of the lengths of binary circular words containing no squares other than 00, 11, and 0101}
\author{James D. Currie\\
Department of Mathematics \&
Statistics\\
The University of Winnipeg\thanks{The author is
supported by an NSERC Discovery Grant}\\
{\tt currie@uwinnipeg.ca}\vspace{.1in}\\
Jesse T. Johnson\\
Department of Mathematics \&
Statistics\\
University of Victoria\\
{\tt jessejoho@gmail.com}}
\maketitle \abstract{\noindent We characterize exactly the lengths of binary circular words containing no squares other than 00, 11, and 0101. Key words: combinatorics on words, circular words, necklaces, square-free words, non-repetitive sequences}

\section{Introduction}
Combinatorics on words started with the work of Thue  \cite{thue06}, who showed the existence of arbitrarily long square-free words over a three letter alphabet. Thue studied circular words also \cite{thue12}, and completely characterized the circular overlap-free words on two letters. Circular words have been relatively unexplored until recently. In 2002, the first author characterized the lengths for which ternary circular square-free words exist: 

\begin{theorem}\label{ternary}
For every positive integer $n$ other than 5, 7, 9, 10, 14, or 17, there is a ternary circular square-free word of length $n$.
\end{theorem}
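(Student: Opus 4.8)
The plan is to split the statement into a negative part (the six exceptional lengths admit no ternary circular square-free word) and a positive part (every other length does), and to reduce the positive part for large $n$ to a finite family of base cases by means of a length-increasing construction.

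First I would dispose of the negative part. For each $n \in \{5,7,9,10,14,17\}$ I would show that every ternary circular word of length $n$ contains a square read cyclically. Up to relabelling of the alphabet and up to rotation we may assume a candidate begins $01\cdots$, which cuts the search drastically; a short backtracking enumeration (or a careful hand case-analysis) over the at most $3^{n}$ words then confirms that none is circular square-free. This is a finite verification. In the same spirit I would dispose of the small \emph{non}-exceptional lengths $n \le N_0$ for a threshold $N_0$ (covering $n=1,2,3,4,6,8,11,12,13,15,16$ together with whatever larger seed lengths the induction below requires) by exhibiting one explicit circular square-free word of each length and checking square-freeness directly. These explicit words serve as the seeds for the constructive step.

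The heart of the argument is the positive part for large $n$. The strategy is to assemble, for every sufficiently large $n$, a circular square-free word out of blocks drawn from a fixed finite repertoire of square-free "tiles" whose lengths realize every residue class modulo some small period $P$, so that an arbitrary target length $n \ge N_0$ can be built up. Concretely I would fix a known square-free morphism $h$ on $\{0,1,2\}$ together with a handful of short square-free connector words, and prove the inductive lemma that a ternary circular square-free word of length $m$ yields one of length $m+P$; combined with the $P$ consecutive seed lengths verified above, this produces a word of every length $n \ge N_0$. The connectors supply the flexibility to hit all residues mod $P$, and $h$ supplies the overall growth.

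The main obstacle is controlling squares at the seam. Unlike the linear case, a square in a circular word may straddle the wraparound point, and each time I apply $h$ or splice in a connector I create new junctions where two blocks abut and where the end of the word meets its beginning. The delicate step is to verify that no square is completed at any such junction, which forces the tiles, the morphism, and the closing point to be chosen so that consecutive image boundaries never finish a repetition. I expect this boundary analysis to be where the real work lies: it should reduce to checking that $h$ is square-free on all length-two cyclic factors, plus a bounded synchronization argument showing that any square in the assembled word of length exceeding the block size would force a square back in the seed, contradicting the seed's square-freeness.
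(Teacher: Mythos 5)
The paper does not actually prove this statement: it quotes it as a known result from the first author's 2002 paper \cite{currie} (and later relies on the sharper ``level'' refinement from \cite{curriejohnson,johnson}), so there is no internal proof to compare yours against. Judged on its own, your outline has the right global shape --- a finite negative check for the six exceptional lengths, explicit seed words for small $n$, and a length-covering construction for large $n$ --- but the entire difficulty of the theorem is concentrated in the one step you only assert, namely the ``inductive lemma that a ternary circular square-free word of length $m$ yields one of length $m+P$.''

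Two concrete problems live there. First, the two mechanisms you invoke pull in different directions: applying a square-free morphism $h$ to a circular square-free word \emph{multiplies} its length (and one needs a separate argument, as in the Rampersad lemma used in this paper, that the image is still circularly square-free), whereas hitting every residue class modulo $P$ requires an \emph{additive} step, i.e.\ splicing a short connector into an existing circular word; you cannot get $m+P$ from $m$ by applying $h$. Second, the splicing step is exactly where the known proofs must work hardest, and it does not reduce to ``checking $h$ on length-two cyclic factors'': inserting a block into a circular square-free word generically completes squares far from the insertion point (a local change can finish a square $uu$ with $|u|$ up to half the word length), and there is no bounded synchronization criterion for ad hoc connectors of the kind that exists for images of a fixed morphism. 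The proofs that actually exist circumvent this either by a delicate extendability analysis in the style of Shelton \cite{shelton} (the route of \cite{currie}), or by obtaining a length-$n$ circular word as the image of a much shorter circular square-free word under one of several morphisms whose image lengths cover all needed residues, with finitely many base cases checked by computer (the route of \cite{curriejohnson,johnson}, mirrored for FS words by Theorem~\ref{m values} here). Until you either prove the $m\mapsto m+P$ lemma or replace it by one of these mechanisms, what you have is a plan rather than a proof.
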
 

Several other proofs \cite{shur10,clokie,mol} of this theorem have now been given, signaling increasing interest in circular words. The general question of when there is a circular word avoiding some pattern has also begun to receive attention \cite{fitz,mousavi}. 
Circular words avoiding patterns seem harder to understand than linear words; while the set of linear words avoiding some pattern is closed under taking factors, this is not true for the circular version. 

Square-free words are objects of continuing interest in combinatorics on words. As proven by Thue, there exist arbitrarily long words over $\{a,b,c\}$ that contain no square factors. On
the other hand, one quickly checks that every word over $\{0,1\}$ of length 4 or greater contains a square. A binary word with as few square factors as possible was found by Fraenkel and Simpson \cite{FS}. 
\begin{theorem}\label{FSHN} There exist arbitrarily long words over $\{0,1\}$ avoiding all factors of the form $xx$, $x\ne 0, 1, 01$. \end{theorem}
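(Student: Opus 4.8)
The plan is to obtain the desired binary words as morphic images of a ternary square-free word, whose existence is the same fact of Thue invoked in Theorem~\ref{ternary}. Fix an infinite square-free word $t$ over the three-letter alphabet $\{a,b,c\}$. I would then look for a morphism $h\colon\{a,b,c\}^*\to\{0,1\}^*$ — most conveniently a uniform one, sending each letter to a binary block of some common length $L$ — and set $w=h(t)$. The entire content of the theorem reduces to the single claim that $w$ contains no square $uu$ other than those with $u\in\{0,1,01\}$. Choosing the three image blocks is the creative step: they must be distinct, must themselves be square-minimal, and must fit together so that no unwanted square is created across block boundaries. A short hand or computer search over candidate blocks of small length locates a working triple; this mirrors the original construction of Fraenkel and Simpson.

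The heart of the argument is a \emph{synchronization} (unique-decipherability) lemma: I would show that the chosen code $\{h(a),h(b),h(c)\}$ has the property that any occurrence of one of the three blocks inside $w$ must line up with the block boundaries inherited from $t$. Concretely, one checks that no block occurs as a factor straddling a seam between two other blocks except in its expected position. Granting synchronization, a sufficiently long square $uu$ in $w$ forces $u$ to be, up to a bounded shift of size $O(L)$, a concatenation of whole blocks, so that $uu$ is the image of a square $vv$ in $t$; since $t$ is square-free this is impossible once $|u|$ exceeds a threshold of order $L$. Hence any forbidden square in $w$ must be \emph{short}.

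It then remains to eliminate short squares by a finite check. For each square $uu$ with $|u|$ below the synchronization threshold I would enumerate the factors of $w$ of the relevant length — these are precisely the images under $h$ of the bounded-length square-free factors of $t$, of which there are finitely many — and verify directly that the only squares appearing are $00$, $11$, and $0101$. Because only finitely many short factors of $t$ exist, this enumeration terminates and completes the proof.

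The main obstacle I expect is twofold, and both parts live in the choice and analysis of the morphism. First, engineering image blocks that simultaneously avoid producing any bad square at the seams — in particular the dangerous periodic patterns $1010$, $0000$, $1111$, and $010101$, noting the genuine asymmetry that $0101$ is permitted while $1010$ is not — is delicate, and a naive uniform morphism may fail, possibly forcing a longer block length or a non-uniform code. Second, proving synchronization rigorously rather than merely observing it on examples requires a careful, somewhat tedious case analysis of how two blocks can overlap; it is this overlap analysis, together with the finite short-square verification, that carries the real weight of the argument.
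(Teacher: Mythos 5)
Your proposal is correct and follows essentially the same route the paper takes: the paper cites Fraenkel--Simpson but its own machinery, Lemma~\ref{HN}, is exactly your plan --- a morphism from square-free ternary words to binary, a synchronization condition (the marker prefix $p$ occurring only at image boundaries) that forces any long square in the image to pull back to a square in the preimage, and a finite check on images of short square-free words (length~3 suffices) to rule out short bad squares. The verification of the chosen blocks is deferred to computer search in the paper as well, so your sketch matches both the structure and the level of detail of the argument actually used.
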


Simpler proofs of this result have been found \cite{harju,rampersadshallit}.
Call a binary word containing no squares other than 00, 11, and 0101 an {\bf FS word}. Harju and Nowotka have shown \cite{harju2} that there are arbitrarily long circular FS words.
It is natural to ask: For exactly which lengths are there circular FS words? We answer this question completely:

\begin{thm}\label{main} There is a circular FS word of length $m$ exactly when $m$ is a non-negative integer other than 9, 10, 11, 13, 15, 16, 17, 18, 21, 22, 23, 25, 26, 27, 29, 31, 32, 33, 34, 35, 37, 40, 41, 42, 45, 47, 49, 53, 56, 59, 61, 64, and 73.
\end{thm}

It would be interesting to probe the structure of circular FS words more deeply.

\begin{problem} For each positive integer $n$, how many circular FS words  are there of length $n$?
\end{problem}

\section{Preliminaries}

For general background on combinatorics on words, see the works of Lothaire   \cite{lothaire83,lothaire02}.
Let $\Sigma$ be a finite set. We refer to $\Sigma$ as an {\bf alphabet}, and its elements as {\bf letters}.  We denote by $\Sigma^*$ the free monoid over $\Sigma$, with identity $\epsilon$, the {\bf empty word}. We call the elements of $\Sigma^*$ {\bf words}. Informally, we think of the elements of $\Sigma^*$ as finite strings of letters, and of its binary operation as concatenation. Thus, if $u=u_1u_2\cdots u_n$, $u_i\in\Sigma$ and $v=v_1v_2\cdots v_m$, $v_j\in\Sigma$, then $uv=u_1u_2\cdots u_nv_1v_2\cdots v_m$. In this case, we say that $u$ is a {\bf prefix} of $uv$ and $v$ is a {\bf suffix}. More generally, if $w=uvz$, then $v$ is a {\bf factor} of $w$.   We say that $v$ appears in $w$ at {\bf index} $i$ in the case where $|u|=i-1$. 
We will work in particular with the alphabets  $B=\{0,1\}$, $S=\{a,b,c\}$,  and $T=\{a,b,c,d\}$. Words over $S$ are called {\bf ternary words}.

A word of the form $s=uu$, $u\ne \epsilon$ is called a {\bf square}. Thus a square $uu$ has period $|u|$. We write $z^2$ for $zz$. A word $w$ which doesn't contain a square factor is said to be {\bf square-free}.
We call a word over $B$ containing no square factors  other than 00, 11, and 0101 an {\bf FS word} (for Fraenkel-Simpson word).

 If $u=u_1u_2\cdots u_n$, $u_i\in\Sigma$, then the {\bf length} of $u$ is defined to be $n$, the number of letters in $u$, and we write $|u|=n$. The set of words of length $m$ over $\Sigma$ is denoted by $\Sigma^m$. We use $\Sigma^{\ge n}$ to denote the set of words over $\Sigma$ of length at least $n$. For $a\in\Sigma$, $u\in\Sigma^*$, we denote by $|u|_a$ the number of occurrences of $a$ in $u$.  
 
If $w=uv$, then define $wv^{-1}=u$. Thus $vu=v(uv)v^{-1}$, and we refer to $vu$ as a {\bf conjugate} of $uv$. The relation `$a$ is a conjugate of $b$' is an equivalence relation on $\Sigma^*$, and we refer to the equivalence classes of $\Sigma^*$ under this equivalence relation as {\bf circular words}. If $w\in\Sigma^*$, we denote the circular word containing $w$ by $[w]$.We may consider the indices $i$ of the letters of a circular word $[u]=[u_1u_2\cdots u_n]$ to belong to ${\mathbb Z}_n$, the integers modulo $n$. Thus $u_{n+1}=u_1$, for example. 
If $[w]$ is a circular word and $v\in\Sigma^*$, we say that $v$ is a {\bf factor} of $[w]$ if $v$ is a factor of an element of $[w]$, i.e., if $v$ is a factor of a conjugate of $w$. A circular word $[w]$ is {\bf square-free} if no factor of $[w]$ is a square.

A circular word $[w]$ is called an
{\bf FS circular word}, if every conjugate of $w$ is an FS word.

Let $\Sigma$ and $T$ be alphabets. A map $\mu:\Sigma^*\rightarrow T^*$ is called a {\bf morphism} if it is a monoid homomorphism, that is, if $\mu(uv)=\mu(u)\mu(v)$, for $u,v\in\Sigma^*$.
A morphism $f:\Sigma^*\rightarrow B^*$ such that $f(w)$ is an FS word whenever $w$ is square-free is called an {\bf FS morphism}.

A ternary $w$ such that  for any letters $x,y\in\{a,b,c\}$,
 $$|w|_x-1\le|w|_y\le|w_x|+1$$
 is called  {\bf level}. The authors recently proved the following:

\begin{theorem}\cite{curriejohnson,johnson}\label{ternary}
There is  a level ternary circular square-free word of length $n$,
exactly when $n$ is a positive integer, $n\ne 5, 7, 9, 10, 14, 17$. \end{theorem}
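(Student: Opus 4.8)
The plan is to prove the two directions separately: that no level ternary circular square-free word exists for $n \in \{5,7,9,10,14,17\}$, and that one does exist for every other positive integer $n$. For the negative direction, observe that levelness is a restriction imposed on top of square-freeness, so a level ternary circular square-free word of length $n$ is in particular a ternary circular square-free word of length $n$. For each of the six listed (small) lengths I would verify by a direct finite search over the finitely many ternary words of that length that no circular square-free word exists at all, which immediately rules out a level one; this is consistent with, indeed subsumed by, the exceptional set in the known characterization of ternary circular square-free words.

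The substance is the positive direction. First I would record, by explicit exhibition aided by a finite search, a level ternary circular square-free word for every admissible length up to a fixed threshold $N$; in particular I want at least one example in each residue class modulo $3$ at lengths just below $N$, since the level letter-distribution of a word of length $n$ is forced by $n \bmod 3$ (balanced counts $(k,k,k)$, $(k+1,k,k)$, or $(k+1,k+1,k)$). Then I would set up an induction in steps of $3$: from a level circular square-free word of length $n$, produce one of length $n+3$ by inserting a permutation of $abc$ at a suitably chosen position. Inserting a permutation of $\{a,b,c\}$ adds exactly one to each letter count, so it preserves the exact level counts; combined with base examples in all three residue classes, a successful step-$3$ extension reaches every sufficiently large $n$.

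The key technical ingredient, and the main obstacle, is an extension lemma guaranteeing that such an insertion can always be carried out without creating a square. Two kinds of squares must be ruled out: short squares straddling the seam where the triple is inserted, and long squares spanning the inserted block. To handle the seam I would choose the inserted permutation so that it disagrees with the surrounding letters, and to handle long-range squares I expect to have to strengthen the induction hypothesis, carrying along a guaranteed safe context (a short factor with prescribed neighbours) at which insertion is provably square-free; this is the delicate point, since an arbitrary level circular square-free word need not admit a safe insertion site.

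An alternative route to the extension is to apply a balanced uniform square-free morphism $\mu$, with $|\mu(a)|=|\mu(b)|=|\mu(c)|=L$, $3 \mid L$, and each image containing $L/3$ of each letter, chosen so that $[\mu(w)]$ is square-free whenever $[w]$ is; then $[\mu(w)]$ is automatically level, which reduces the problem to covering a bounded range of lengths. However, this only multiplies lengths by $L$, so it must still be combined with the additive base examples to fill every residue and every intermediate length. Either way, the crux is the uniform verification that square-freeness survives the length-changing operation for all large $n$; by contrast, the levelness bookkeeping is routine once the step is chosen to add one of each letter.
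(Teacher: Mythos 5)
The paper does not prove this theorem at all: it is quoted from \cite{curriejohnson,johnson} with a citation in place of a proof, so the question is whether your outline could stand on its own, and it cannot. Your negative direction is fine --- for $n\in\{5,7,9,10,14,17\}$ no ternary circular square-free word of length $n$ exists at all, level or not, so a finite check suffices. But your positive direction rests entirely on the extension lemma that you yourself defer (``this is the delicate point''), and that lemma is the entire content of the theorem. Worse, in the unconditional form you state it, it is provably false: a level ternary circular square-free word of length $11$ exists ($11$ is not in the exceptional set), yet no ternary circular square-free word of length $14$ exists whatsoever, so \emph{no} insertion of a permutation of $abc$ at \emph{any} position of \emph{any} such length-$11$ word can avoid creating a square. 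The same obstruction rules out the steps $2\to5$, $4\to7$, and $6\to9$. Consequently no ``safe context'' invariant of the kind you hope to carry can propagate through all lengths; at best the step-$3$ induction can only be launched above the exceptional set (bases at, say, $18$, $19$, $20$, with all smaller admissible lengths verified by search), and even then the existence and propagation of the safe insertion site --- the actual mathematical content --- is asserted as an expectation, not proved. Circular square-freeness is not closed under taking factors or under local surgery, which is precisely why this result required a separate paper.

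Your fallback via a balanced uniform square-free morphism does not close the gap either: by the Rampersad lemma reproduced in this paper, a square-free morphism applied to a circular square-free word $[w]$ with $|w|\ge2$ does yield a circular square-free image, and with each image block containing $L/3$ of each letter the image is automatically level; but, as you concede, this produces only lengths of the form $L\,|w|$, so every residue class and every intermediate length throws you back on the unproved insertion step. In short, the proposal is a reasonable research plan whose central lemma is missing and, as literally stated, false for small lengths; a complete argument would need the full strength of the constructions in \cite{curriejohnson,johnson}, which proceed differently and cannot be reconstructed from this sketch.
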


\section{Constructing circular FS words}

We begin by proving a generalization of an approach used by Harju and Nowotka \cite{harju}. They used  it to demonstrate that a particular morphism applied to  ternary square-free words gave FS words. Here, we use it to find FS morphisms  on alphabets of any size:
\begin{lemma}\label{HN} Fix $n\ge 3.$
Suppose $f:\Sigma_n^*\rightarrow\Sigma_2^*$ is a morphism satisfying these conditions:
\begin{enumerate}
\item\label{1} For any square-free $v\in\Sigma_n^3$, $f(v)$ is 
an FS word.
\item There is a word $p\in\Sigma_2^*$, $|p|\ge 3$, such that:
\begin{enumerate}
\item\label{2a} For each $a\in\Sigma_n$, $p$ is a prefix of $f(a)$.
\item\label{2b} If $a_i\in \Sigma_n$, $1\le i \le \ell$, and $f(a_1a_2\cdots a_\ell)=qpr$ for some words $q,r\in\Sigma_2^*$, then $q=\epsilon$ or $q=f(a_1a_2\cdots a_j)$, some $j\le\ell$.
\end{enumerate}
\end{enumerate}
Then $f$ is an FS morphism.
\end{lemma}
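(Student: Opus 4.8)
The plan is to prove the contrapositive in the form: if $w=a_1a_2\cdots a_k$ is square-free but $f(w)$ contains a \emph{bad} square (a square other than $00$, $11$, $0101$), then either condition~(1) or the square-freeness of $w$ is violated. First I would record the block structure: by condition~(2a) every $f(a)$ has $p$ as a prefix, so $|f(a)|\ge|p|\ge3$ and $f(w)$ is naturally partitioned at the boundaries $b_i=|f(a_1\cdots a_i)|$. I would also note that $f$ is injective on letters: if $f(a)=f(b)$ with $a\ne b$, then $f(aba)=f(a)f(a)f(a)$ contains the square $f(a)f(a)$ of period $|f(a)|\ge3$, so $f(aba)$ is not an FS word, contradicting condition~(1) since $aba$ is square-free.

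The heart of the argument is synchronization. Conditions~(2a) and~(2b) together say that the occurrences of $p$ inside any image $f(a_1\cdots a_\ell)$ are \emph{exactly} the boundaries $b_i$. I would combine this with periodicity: if a bad square $XX$ of period $\pi=|X|$ occupies the interval $[s,s+2\pi)$ of $f(w)$, then this interval has period $\pi$, so any occurrence of $p$ at a boundary $b_i$ lying in the first copy forces, by periodicity, another occurrence of $p$ exactly $\pi$ positions later; by synchronization that translated occurrence must again be a boundary. Thus the boundaries falling inside the square form a $\pi$-periodic set.

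Next I would use this to align the square to the boundary lattice and descend. Once the square is aligned, so that its left end and its midpoint are boundaries, the period $\pi$ equals $|f(z)|$ for a block of consecutive letters $z=a_{j+1}\cdots a_{j'}$, the second copy of $X$ is the image of the next block of letters, and matching boundaries across the two copies together with injectivity of $f$ forces that block of letters to repeat, producing a nonempty square $zz$ as a factor of $w$ and contradicting square-freeness. I expect the alignment to be the main obstacle: ruling out a square that is \emph{offset} from the boundary lattice. The key leverage is that an offset square, pushed through the $\pi$-periodicity of $p$-occurrences established above, would place an occurrence of $p$ strictly inside some block $f(a_i)$, which synchronization~(2b) forbids.

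Finally I would dispatch the small cases. When $\pi$ is too small for a full copy of $p$ to sit inside $X$, so that the periodicity-of-$p$ mechanism cannot be started, the square $XX$ is confined to the image $f(xyz)$ of three consecutive letters $xyz$; this factor is square-free, so by condition~(1) the word $f(xyz)$ is an FS word and contains no bad square, a contradiction. The bound identifying this short-period regime is a routine estimate using $|p|\ge3$. Combining the two regimes, $f(w)$ contains no bad square, so $f(w)$ is an FS word whenever $w$ is square-free, i.e.\ $f$ is an FS morphism.
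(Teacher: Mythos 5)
Your overall strategy---synchronizing occurrences of $p$ with the block boundaries via conditions (2a) and (2b), transporting boundaries across the square by periodicity, and descending to a square in $w$---is essentially the paper's. But there is a genuine error at the step you yourself flag as the crux. You claim that a square offset from the boundary lattice must, via periodicity, place an occurrence of $p$ strictly inside some block, contradicting (2b). This is false, and the paper's Case B(ii) is exactly the configuration you are dismissing: the square $xx$ can begin strictly inside $f(w_1)$ and have its midpoint strictly inside $f(w_j)$ while every occurrence of $p$ still sits at a block boundary. All that synchronization gives you there is that the first occurrence of $p$ has the same offset in both halves, i.e.\ $|W_1''|=|W_j''|$ in the paper's notation; no conflict with (2b) arises, and the contradiction must instead come from propagating the matching of boundaries to obtain $w_2\cdots w_j=w_{j+1}\cdots w_m$, a square in $w$. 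So alignment is neither achievable nor needed: the descent has to be run in the offset case too. The ``$p$ lands strictly inside a block'' mechanism is the right tool only for the paper's Case A, where one of the two end fragments $W_1''$, $W_m'$ is longer than the entire period $|x|$---a case your sketch does not isolate and which is not covered by your ``$\pi$ too small for $p$'' regime, since there the period is large but the square is swallowed by a single long block.

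Two smaller points. The injectivity you record ($f(a)=f(b)\Rightarrow a=b$) is not quite strong enough: at the seams of the square one only learns that one of two images is a prefix of the other, so you need the paper's stronger statement that $f(a)$ a prefix of $f(b)$ forces $a=b$ (proved the same way from condition (1), since $|f(a)|\ge|p|\ge 3$ rules out the permitted squares $00$, $11$, $0101$). And your dichotomy ``either the periodicity-of-$p$ mechanism starts or $xx$ is confined to the image of three letters'' needs justification; the clean route is the paper's: take a counterexample $w$ of minimal length, so that the square necessarily meets every block, note $|w|\ge 4$ by condition (1), and only then split into the two cases above.
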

\begin{proof}
To begin with, note that the conditions imply that if $a,b\in\Sigma_n$ and $f(a)$ is a prefix of $f(b)$, then $a=b$. Otherwise, $aba$ is a square-free word of length 3, with square prefix $f(a)f(a)$. However, $|f(a)|\ge |p|\ge 3$, so $f(a)f(a)\ne 00, 11$, or 0101. This contradicts Condition~\ref{1}.
 
For the sake of getting a contradiction, consider a square-free word $w=w_1w_2\cdots w_m$, with the $w_i\in\Sigma_n$, such that $f(w_1w_2\cdots w_m)$ contains a square $xx$, $x\ne\epsilon,0,1,01$. Let $m$ be as small as possible. By Condition~\ref{1}, $m\ge 4$. Since $m$ is minimal, write $$xx=W_1^{\prime\prime}W_2\cdots W'_m,$$ 
\begin{eqnarray*}
\mbox{where }f(w_1)&=&W_1'W_1^{\prime\prime}, \/W_1^{\prime\prime}\ne \epsilon\\
f(w_i)&=&W_i, \/2\le i\le m-1\\
f(w_m)&=&W_m'W_m^{\prime\prime}, \/W_m'\ne \epsilon.
\end{eqnarray*}
As per Condition~\ref{2a}, write $W_2=pW_2''$.
\subsubsection*{Case A: $|x|<|W_1''|$ or $|x|<|W_m'|$} 
If $|x|<|W_1''|$, write $W_1''=xW_1^{\prime\prime\prime}$, $W^{\prime\prime\prime}_1\ne \epsilon$.   Then we find the second copy of $x$ in $xx$ can be written   
$$x=W_1^{\prime\prime\prime}W_2\cdots W_m'=W^{\prime\prime\prime}_1pW''_2\cdots W_m'.$$ However, then $$f(w_1)=W_1'W_1''=W_1'xW_1^{\prime\prime\prime}=W_1'W^{\prime\prime\prime}_1pW''_2\cdots W_m'W_1^{\prime\prime\prime}$$ contains an instance of $p$ at an index which contradicts Condition~\ref{2b}.

Similarly, if $|x|<|W_m'|$, write $W_m'=W_m^{\prime\prime\prime}x$, $W^{\prime\prime\prime}_m\ne \epsilon$.   Then we find the first copy of $x$ in $xx$ can be written   
$$x=W_1^{\prime\prime}W_2\cdots W_m^{\prime\prime\prime}=W^{\prime\prime\prime}_1pW''_2\cdots W_m^{\prime\prime\prime}.$$ However, then $$f(w_m)=W_m'W_m''=W_m^{\prime\prime\prime}xW_m^{\prime\prime}=W_m^{\prime\prime\prime}W^{\prime\prime}_1pW''_2\cdots W_m^{\prime\prime\prime}W_m^{\prime\prime}$$ 

contains an instance of $p$ at an index which contradicts Condition~\ref{2b}.
\subsubsection*{Case B: $|x|\ge|W_1''|,|W_m'|$} 
In this case we can write
\begin{eqnarray*}
x&=&W_1''\cdots W_j'\\
&=&W_j''\cdots W_m',
\end{eqnarray*}
for some $j$, $1<j<m$, with $W_j=W_j'W_j''$. 

 If $j>2$, then there is at least one instance of $p$ in $x=W_1''W_2\cdots W_j'$, appearing as a prefix of $W_2$. On the other hand, if $j=2$, then an instance of $p$ appears as a prefix of $W_{j+1}$ in $x=W_j''W_{j+1}\cdots W'_m$. In either case, there is at least one instance of $p$ in $x$. For the sake of definiteness, adjusting notation if necessary, choose $j$ so that $W_j'=\epsilon$ if $x$ starts with $p$; that is, assume in all cases that $W_j''\ne\epsilon$.
 
\subsubsection*{Case B(i): Word $x$ starts with $p$.} 

If $x$ starts with $p$, then Condition~\ref{2b} forces $W_1''=W_1$. Our choice of notation gives $W_j''=W_j$. Since $W_1$ and $W_j$ are prefixes of $x$, one must be a prefix of the other, and, as noted at the beginning of this proof, this forces $w_1=w_j$. Therefore $W_1=W_j$.

We prove by induction that for $1\le i\le j-2$, $w_1\cdots w_i=w_j\cdots w_{j+i-1}$, and $W_{i+1}\cdots W_{j-1}=W_{j+i}\cdots W_m'.$ We have just established the base case of this induction, when $i=1$.

Suppose that for some $k$, $1\le k< j-2$, we have $w_1\cdots w_k=w_j\cdots w_{j+k-1}$, and $W_{k+1}\cdots W_{j-1}=W_{j+k}\cdots W_m'.$ Then one of $W_{k+1}$ and $W_{j+k}$ is a prefix of the other, giving $w_{k+1}=w_{j+k}$, yielding the induction step.

Setting $i=j-1$, we see that $w_1\cdots w_{j-1}=w_j\cdots w_{2j-2}$. 
However, now $w$ contains the square $(w_1\cdots w_{j-1})^2.$ Since $|w_1\cdots w_{j-1}|\ge|p|=3$, this is a contradiction.
\subsubsection*{Case B(ii): Word $x$ doesn't start with $p$.} The first $p$ in $x$ is at the beginning of $W_2$:
If $x=W_1''W_2\cdots W_j'$ has an instance of $p$ of index $i$, $1<i< |W_1'|+1$, then $f(w_1w_2)$ contains an instance of $p$ of index properly between $1$ and $|f(w_1)|+1$, violating property 2(b). Thus the least index of $p$ in $x$ is $|W_1'|+1$. However, an analogous argument observing that $x=W_j''W_{j+1}\cdots W_m'$ yields least index of $p=|W_j''|+1.$ Thus $W_1''$ and $W_j''$ are prefixes of $x$ with the same length, forcing $W_1''=W_j''$. Now, $W_2\cdots W_j'=W_{j+1}\cdots W_m'$, so that one of $W_2$ and $W_{j+1}$ is a prefix of the other, forcing $w_2=w_{j+1}$.    

We prove by induction that for $2\le i\le j-2$, $w_2\cdots w_i=w_{j+1}\cdots w_{j+i-1}$, and $W_{i+1}\cdots W_{j-1}W_j'=W_{j+i}\cdots W_{m-1} W_m'.$ We have just established the base case of this induction, when $i=2$.

Suppose that for some $k$, $1\le k< j-1$, we have $w_2\cdots w_k=w_{j+1}\cdots w_{j+k-1}$, and $W_{k+1}\cdots W_{j-1}W_j'=W_{j+k}\cdots W_{m-1}W_m'.$ Then one of $W_{k+1}$ and $W_{j+k}$ is a prefix of the other, giving $w_{k+1}=w_{j+k}$, yielding the induction step.

When $i=j-1$, we find $W_j'=W_m'$. Since one of $W_j$ and $W_m$ must be a prefix of the other, $w_j=w_m$. Then $w$ contains the square $w_2\cdots w_j w_{j+1}\cdots w_m=(w_2\cdots w_j)^2$. Since $|w_2\cdots w_j|\ge|p|=3$, this is a contradiction.
\end{proof}

One can find morphisms on $T$ satisfying the conditions of Lemma~\ref{HN} by computer search.

It is possible to build circular FS words from circular square-free words and FS morphisms, using the following Lemma and Corollary due to Rampersad\cite{rampersad}.
\begin{lemma} If $f$ is a square-free morphism from $\Sigma_n$ to $\Sigma_m$, and $[w]$ is a square-free circular word with $|w|\ge 2$, then $[f(w)]$ is a square-free circular word. 
\end{lemma}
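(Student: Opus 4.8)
The plan is to argue by contradiction, pulling a putative square in $[f(w)]$ back through $f$ to a square-free factor of $[w]$ and then invoking the defining (linear) property of a square-free morphism. Write $n=|w|$ and $N=|f(w)|$, and assume, as is standard for square-free morphisms, that $f$ is non-erasing. I would begin with two reductions. First, since a factor of a circular word has length at most the length of the word, ``$[u]$ is square-free'' means precisely that no factor of $[u]$ of length at most $|u|$ is a square; in particular every conjugate of $w$ is a square-free \emph{linear} word, because any square inside a conjugate has length at most $n$ and is therefore a factor of $[w]$. Second, every factor of $[f(w)]$ of length at most $N$ occurs in the linear word $f(w)f(w)=f(ww)$, since each conjugate $g$ of $f(w)$ is itself a factor of $f(w)f(w)$. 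Thus it suffices to show that $f(ww)$ contains no nonempty square of length at most $N$.

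Suppose then that $xx$, with $x\ne\epsilon$ and $|xx|\le N$, is a factor of $f(ww)$, and let $z$ be the shortest factor of the linear word $ww$ such that $xx$ is a factor of $f(z)$. By minimality $xx$ begins inside the image of the first letter of $z$ and ends inside the image of the last, so the images of the interior letters of $z$ lie entirely within $xx$. The crucial estimate is a block count: any $n$ consecutive blocks $f(w_k)\cdots f(w_{k+n-1})$ make up one full period and so have total length exactly $N$. Hence $xx$ cannot contain $n$ or more complete interior blocks, since that would force $|xx|\ge N$ with no room left for the two end pieces, contradicting the minimality of $z$. Therefore $|z|\le n+1$.

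It remains to check that $z$ is square-free, after which the defining property of $f$ gives that $f(z)$ is square-free, contradicting that the nonempty square $xx$ is a factor of $f(z)$. If $|z|\le n$ then $z$ is a factor of $[w]$ and is square-free by hypothesis. The delicate case is $|z|=n+1$: here $z=w'a$, where $w'$ is a conjugate of $w$ and $a=w'[1]$ is the letter following it in $ww$. Any square in $z$ of length at most $n$ is again a factor of $[w]$, so the only way $z$ could fail to be square-free is for $z$ itself to be a square of length $n+1$; but $z=w'a=yy$ forces $w'$ to have period $(n+1)/2$ together with $w'[(n+1)/2]=w'[1]$, and comparing this with the period relation $w'[1]=w'[1+(n+1)/2]$ yields $w'[(n+1)/2]=w'[(n+1)/2+1]$, i.e.\ two equal adjacent letters and hence a square of length $2$ inside the square-free word $w'$ --- impossible. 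So $z$ is square-free in every case, completing the contradiction.

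I expect the block-counting boundary $|z|=n+1$ to be the only real obstacle; the remaining steps are bookkeeping. It is also exactly where the hypothesis $|w|\ge 2$ is used: for $n=1$ the period argument is vacuous, and indeed $z=w'a$ then reads as a single letter repeated, namely the genuine square $aa$, mirroring the fact that the statement itself fails at length $1$ (for instance $[a]$ is circular square-free while $[f(a)]$ need not be). I would also remark that the extremal case in which $xx$ would coincide with the full image $f(w')$ of a conjugate is automatically excluded, since such an $f(w')$ is square-free and hence not a nonempty square.
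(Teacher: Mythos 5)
Your proof is correct, and it ends in the same place as the paper's: both arguments come down to the fact that a conjugate $w'$ of $w$ extended by its own first letter is a word of length $n+1$ all of whose proper factors are factors of $[w]$, so the only square it could carry is itself, and that possibility dies because a square of length $n+1$ forces two equal adjacent letters in $w'$ (the paper words this as ``$w_1w_1$ appears at the center of $xx$''). Where you genuinely differ is in how you reach that length-$(n+1)$ word. The paper simply picks the representative of $[f(w)]$ containing the square, writes it as $W_1''W_2\cdots W_\ell W_1'$ with $W_1=W_1'W_1''$, and observes that this is literally a factor of $f(w_1w_2\cdots w_\ell w_1)$; no minimality argument or counting is needed. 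You instead work inside $f(ww)$, take a minimal preimage factor $z$ of the square, and bound $|z|\le n+1$ by counting complete blocks against the period $N$ --- the standard reduction used for linear square-free-morphism criteria. That costs a little more machinery but is equally valid, and your use of minimality to make the two end pieces nonempty is exactly what the count needs, though the contradiction you actually reach there is with $|xx|\le N$ rather than literally with ``the minimality of $z$.'' One loose end worth a sentence: your blanket assumption that $f$ is non-erasing should be discharged, e.g.\ by noting that a square-free morphism on an alphabet with at least two letters that erases some letter but not all would send the square-free word $bab$ to the square $f(b)f(b)$, while if it erases every letter the conclusion is vacuous.
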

\begin{proof}
Write $w=w_1w_2\cdots w_\ell$, $w_\ell\in \Sigma_n$. Let $f(w_i)=W_i$, $1\le i\le \ell$. Replacing $w$ with one of its conjugates if necessary, we can assume that $W_1^{\prime\prime}W_2\cdots W_\ell W_1'$ is a representative of $[f(w)]$ containing a square, where $W_1=W_1'W_1^{\prime\prime}.$ Then $W_1W_2\cdots W_\ell W_1=f(w_1w_2\cdots w_\ell w_1)$ also contains this square. Since $f$ is square-free, this implies that $w_1w_2\cdots w_\ell w_1$ contains some square $xx$. Both $w_1w_2\cdots w_\ell$ 
and $w_2\cdots w_\ell w_1$ are representatives of $w$, and are thus square-free. It follows that $xx=w_1w_2\cdots w_\ell w_1$. However, $x$ then begins and ends with letter $w_1$, so that $w_1w_1$ appears at the center of $xx$, whence  $w$ contains the square $w_1w_1$.This is a contradiction.
\end{proof}
\begin{corollary}\label{square to circ FS} If $f$ is a FS morphism from $\Sigma_n$ to $\Sigma_2$, and $[w]$ is a square-free circular word with $|w|\ge 2$, then $[f(w)]$ is an FS circular word. 
\end{corollary}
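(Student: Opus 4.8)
The plan is to adapt the proof of the preceding Lemma almost verbatim, changing only the single step where the morphism hypothesis is used. I would argue by contradiction: suppose $[f(w)]$ is not an FS circular word, so that some conjugate of $f(w)$ contains a square $xx$ with $x\ne\epsilon,0,1,01$. By hypothesis this $xx$ is a genuinely forbidden square, that is, $xx$ is not one of $00$, $11$, or $0101$.

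First I would set up the same notation as in the Lemma. Write $w=w_1w_2\cdots w_\ell$ and put $W_i=f(w_i)$. After replacing $w$ by an appropriate conjugate (which changes neither $[w]$ nor its square-freeness), I may assume the offending conjugate of $f(w)$ is $W_1''W_2\cdots W_\ell W_1'$, where $W_1=W_1'W_1''$, and that it contains $xx$. Since this conjugate is a factor of $W_1W_2\cdots W_\ell W_1=f(w_1w_2\cdots w_\ell w_1)$, the square $xx$ also appears inside $f(w_1w_2\cdots w_\ell w_1)$.

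The one place where my argument departs from the Lemma is the next step. Rather than appealing to ``$f$ is square-free,'' I would invoke the contrapositive of the FS-morphism property: because $f(w_1\cdots w_\ell w_1)$ contains the square $xx$, and $xx$ is none of the permitted squares $00$, $11$, $0101$, the word $f(w_1\cdots w_\ell w_1)$ fails to be an FS word; hence its preimage $w_1w_2\cdots w_\ell w_1$ cannot be square-free. Thus $w_1\cdots w_\ell w_1$ contains some square $yy$. From here the reasoning is identical to the Lemma: both $w_1\cdots w_\ell$ and $w_2\cdots w_\ell w_1$ are representatives of the square-free circular word $[w]$, hence square-free, so $yy$ is contained in neither and must span the entire word, forcing $yy=w_1\cdots w_\ell w_1$. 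Then $y$ begins and ends with $w_1$, placing $w_1w_1$ at the center of $yy$, so $[w]$ contains the square $w_1w_1$, contradicting its square-freeness.

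The only point that genuinely needs checking — and hence the main, though mild, obstacle — is that the FS-morphism hypothesis really does deliver the same conclusion as square-freeness for the square in question. It does, precisely because the standing assumption $x\ne\epsilon,0,1,01$ guarantees that $xx$ is forbidden: an FS morphism declines to control only the three allowed squares $00$, $11$, $0101$, so against any other square it behaves exactly like a square-free morphism. Once this observation is in place, every remaining step transfers from the Lemma without alteration.
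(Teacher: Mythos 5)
Your proposal is correct and matches the paper's proof, which simply states that the Lemma's argument goes through after replacing ``containing a square'' by ``containing a square other than $00$, $11$, $0101$'' and ``square-free'' by ``an FS morphism''; you have spelled out exactly that substitution, correctly noting that the contrapositive of the FS-morphism property plays the role that square-freeness of $f$ played in the Lemma.
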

\begin{proof} The previous proof goes through, replacing `containing a square' by `containing a square other than 00, 11, 0101', and `square-free' by `an FS morphism'. \end{proof}

To produce circular FS words with specific lengths, we make use of the following recent result by the authors \cite{curriejohnson,johnson}:

\begin{theorem}\label{ternary}
There is  a level ternary circular square-free word of length $n$,
for each positive integer $n$, $n\ne 5, 7, 9, 10, 14, 17$. \end{theorem}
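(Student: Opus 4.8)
The plan is to prove the statement in two halves: a \emph{non-existence} half for the six exceptional lengths $n\in\{5,7,9,10,14,17\}$, and an \emph{existence} half for every other positive integer. The first half is purely finite. Since levelness forces each of $a,b,c$ to occur either $\lfloor n/3\rfloor$ or $\lceil n/3\rceil$ times, there are only a handful of candidate necklaces of each small length, and one checks directly (by hand or by a short computer search) that for each exceptional $n$ no level arrangement is square-free as a circular word. This finite verification also lets me dispose of the small cases of the existence half: I would exhibit, by the same search, one explicit level circular square-free word of each admissible length $n$ up to a threshold $N_0$.

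The substance of the argument is then a single \textbf{lifting lemma}: from a level circular square-free word of length $n\ge N_0$, produce one of length $n+3$. The step size $3$ is forced by the levelness invariant — inserting exactly one $a$, one $b$, and one $c$ raises all three letter-counts by $1$, so it preserves the residue $n\bmod 3$ and keeps the three counts \emph{simultaneously} balanced, maintaining levelness exactly rather than merely up to a bounded error. Iterating the lemma from the finitely many base words of lengths in $[N_0,N_0+2]$ (one in each residue class) then yields every length $n\ge N_0$, and the explicit base cases cover all smaller admissible lengths. It is worth noting that the exceptional set here is identical to that for the non-level problem; this coincidence is suggestive and hints that a long ternary circular square-free word can always be ``rebalanced,'' but rebalancing directly while preserving square-freeness seems at least as hard as the insertion approach, so I would pursue the latter.

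Concretely, the lifting step would be carried out by locating inside a representative $w=w_1\cdots w_n$ a position at which a balanced block (a permutation of $abc$, chosen with all three letters distinct so it contains no internal square) can be spliced in without creating any new square. Because circular square-freeness is \emph{not} closed under taking factors, one cannot simply cut and re-glue arbitrarily: after inserting the block one must verify that no square is created across the left seam, across the right seam, or after wrapping around the circle. The two genuine freedoms available are the choice of insertion site (of which there are $n$, growing with the length) and the choice among the six permutations of $abc$, and I would exploit this freedom to guarantee a safe splice.

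The hard part will be exactly this lemma, because of the interaction of the two invariants. A purely square-free insertion would enjoy considerable latitude, but the demand that the inserted letters be \emph{one of each kind} removes much of it, so the insertion site must be chosen compatibly with a balanced block; in particular the shortest dangerous squares, which arise from placing a letter adjacent to an equal letter, must be ruled out by the local structure of $w$ at the chosen site. I would therefore prove, as the crux, that every sufficiently long level ternary circular square-free word contains an insertion site whose short neighbourhood (say the factor of length $3$ or $4$ surrounding it) is of a prescribed type into which some permutation of $abc$ can be inserted safely, and that no longer square can straddle the result. Establishing that such a site always exists for all large $n$ would rest on a finite case analysis of the admissible short neighbourhoods permitted by square-freeness, together with a pigeonhole argument over the many available sites; pushing this analysis through uniformly in $n$ is where the real work lies.
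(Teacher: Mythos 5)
The paper does not actually prove this theorem: it is quoted from the authors' separate work \cite{curriejohnson,johnson}, so there is no internal proof here to compare against. Judged on its own terms, your proposal has a genuine gap at exactly the point you yourself flag as the crux. The entire weight of the argument rests on the lifting lemma --- that every sufficiently long level ternary circular square-free word admits a site at which some permutation of $abc$ can be spliced in without creating a square --- and you do not prove it; you only describe the shape a proof would take. Moreover, the verification you sketch is inadequate even in outline: inspecting ``the factor of length $3$ or $4$ surrounding'' the insertion site can exclude only squares of bounded period. A square created by the insertion may have one half lying entirely outside the inserted block and the other half containing it, with period up to roughly half the length of the word; excluding these requires a global argument relating long squares in the spliced word to repetitive structure already present in the original word (in the spirit of Shelton's extendibility analysis \cite{shelton}, or of the synchronization arguments used for the morphisms elsewhere in this paper), not a finite case analysis of short neighbourhoods. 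You assert ``that no longer square can straddle the result'' but supply no mechanism for proving it.

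The pigeonhole step also needs more than you give it. The number of insertion sites grows with $n$, but so does the number of events to exclude, since every candidate period at every position is a separate potential square; to conclude that some site survives you must bound the total number of sites killed by squares of \emph{all} periods, which again forces you to confront the long-period case rather than only the adjacency-type squares. None of this is obviously unworkable --- insertion and surgery arguments have been made to succeed for circular square-free words --- but as written the proposal is a research plan whose one substantive lemma is missing, so it cannot be accepted as a proof of the theorem.
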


Here is how we  produce circular FS words with desired lengths: Given
 $n\ge 2$, $n\ne 5, 7, 9, 10, 14, 17$, write $n=3i+j$, integers $i$ and $j$ such that $-1\le j\le 1$. Note  that $i\ge 1$. Let $w$ be a level circular square-free word over $S$ with $|w|=n$.  Permuting $a$, $b$, $c$ if necessary, assume that , $|w|_a=i+j$,  $|w|_b=|w|_c=i$. For $k\le i+j$, replacing $k$ of the $a$'s in $w$ by $d$'s gives a circular square-free word $u$ over $T$ with 
$|u|_a=i+j-k$,  $|u|_b=|u|_c=i$, $|u|_d=k$.

Suppose that $f:T^*\rightarrow B^*$ is an FS morphism,  with
$|f(a)|=\alpha$, $|f()|=\beta$, $|f(c)|=\gamma$, $|f(d)|=\delta$. By 
Corollary~\ref{square to circ FS}, $[f(u)]$ is a circular FS word, with
length 
\begin{eqnarray*}
\sum_{t\in T}|f(t)||u|_t&=&\alpha(i+j-k)+\beta i+\gamma i+\delta(k)\\
&=&(\alpha+\beta+\gamma)i+\alpha j+k(\delta-\alpha)
\end{eqnarray*}.

In a similar way, for $k\le i$, replacing $k$ of the $b$'s in $w$ by $d$'s gives a circular square-free word $v$ over $T$ with 
$|v|_a=i+j$,  $|v|_b=i-k$,$ |v|_c=i$, $|v|_d=k$,
 and  $[f(v)]$ is a circular FS word  with
length 
$$(\alpha+\beta+\gamma)i+\alpha j+k(\delta-\beta)
$$

We have proved the following:

\begin{theorem}\label{m values} Suppose there exists a FS morphism $f:T^*\rightarrow B^*$, with 
$|f(a)|=\alpha$, $|f(b)|=\beta$, $|f(c)|=\gamma$, $|f(d)|=\delta$. Then there exists a circular FS word of length $m$ for every positive integer $m$ of the form
\begin{equation}\label{length1}m=(\alpha+\beta+\gamma)i+\alpha j+k(\delta-\alpha)\end{equation}
with integers $i,j,k$ such that  
\begin{itemize}
\item $i\ge 1$
\item $-1\le j\le 1$
\item $3i+j\ne 5,7,9,10,14,17$
\item $k\le i+j$,
\end{itemize}
and
of every length $m$ of the form
\begin{equation}\label{length2}m=(\alpha+\beta+\gamma)i+\alpha j+k(\delta-\beta)\end{equation}
with  integers $i,j,k$ such that  
\begin{itemize}
\item $i\ge i$
\item $-1\le j\le 1$
\item $3i+j\ne 5,7,9,10,14,17$
\item $k\le i$.
\end{itemize}
\end{theorem}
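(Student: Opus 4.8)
The plan is to assemble the construction sketched just before the statement into a proof whose core is this: begin with a level ternary circular square-free word, substitute a fresh letter $d$ for some occurrences of a chosen letter, and then apply the given FS morphism $f$, invoking Corollary~\ref{square to circ FS} to conclude that the image is a circular FS word of the prescribed length.

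First I would fix integers $i,j,k$ meeting the stated constraints and set $n=3i+j$. Since the constraints force $n$ to be a positive integer with $n\notin\{5,7,9,10,14,17\}$, Theorem~\ref{ternary} supplies a level ternary circular square-free word $[w]$ with $|w|=n$. Because $[w]$ is level, its three letter counts lie within $1$ of one another and sum to $3i+j$; a short case check on $j\in\{-1,0,1\}$ shows that, after permuting $a,b,c$, we may take $|w|_a=i+j$ and $|w|_b=|w|_c=i$.

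Next I would substitute the new letter $d$ for $k$ occurrences of $a$, which is possible precisely when $k\le i+j=|w|_a$, obtaining a circular word $[u]$ over $T$. To see that $[u]$ is square-free, consider the length-preserving projection $\pi:T^*\to S^*$ that sends $d\mapsto a$ and fixes $a,b,c$; it satisfies $\pi([u])=[w]$ and carries any square $xx$ in $[u]$ to a square $\pi(x)\pi(x)$ of the same length in $[w]$, so square-freeness of $[w]$ forces $x=\epsilon$. By construction $|u|_a=i+j-k$, $|u|_b=|u|_c=i$, $|u|_d=k$, and $|u|=n\ge 2$, so Corollary~\ref{square to circ FS} applies and $[f(u)]$ is a circular FS word. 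Its length is $\sum_{t\in T}|f(t)|\,|u|_t$, which expands to $(\alpha+\beta+\gamma)i+\alpha j+k(\delta-\alpha)$, yielding formula~\eqref{length1}. Repeating the argument verbatim with $k\le i=|w|_b$ occurrences of $b$ replaced by $d$ produces formula~\eqref{length2}.

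None of these steps should present a serious obstacle, as each is either a direct application of a result established above or an elementary counting identity. The only points that deserve care are the case analysis that pins down the exact letter distribution from the level hypothesis, and the verification that $|u|=3i+j\ge 2$ (guaranteed by $i\ge 1$ and $j\ge -1$) so that Corollary~\ref{square to circ FS} may legitimately be invoked.
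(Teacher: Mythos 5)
Your proposal is correct and follows essentially the same route as the paper: take a level ternary circular square-free word of length $3i+j$ with letter counts $i+j,i,i$, replace $k$ occurrences of $a$ (resp.\ $b$) by $d$, and apply Corollary~\ref{square to circ FS} to the FS morphism $f$ to obtain a circular FS word of the length given by (\ref{length1}) (resp.\ (\ref{length2})). Your added justifications (the projection argument for square-freeness of $[u]$ and the case check deriving the letter distribution from levelness) are correct details the paper leaves implicit.
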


As an example of the application of this theorem, we prove the following:

\begin{lemma}\label{7400} Suppose that $m$ is an integer,  $m\ge 7400$. There is a circular FS word of length $m$.
\end{lemma}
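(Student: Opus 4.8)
The plan is to invoke Theorem~\ref{m values} for a single explicit FS morphism and then show, by an elementary covering argument, that the resulting families of lengths \eqref{length1} and \eqref{length2} already exhaust every integer $m \ge 7400$. First I would fix a concrete FS morphism $f \colon T^* \to B^*$ obtained from the computer search mentioned after Lemma~\ref{HN}, recording its four image lengths $\alpha, \beta, \gamma, \delta$; the verification that $f$ meets the hypotheses of Lemma~\ref{HN} is a finite check I would relegate to computation. Writing $\sigma = \alpha + \beta + \gamma$, Theorem~\ref{m values} then supplies, for each admissible triple $(i, j, k)$, the length $\sigma i + \alpha j + k(\delta - \alpha)$ (and similarly with $\delta - \beta$), so the whole problem reduces to a question about which integers lie in the union of these arithmetic sets.

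The key observation driving the covering argument is that, for fixed $i$ and $j$, letting $k$ run over $0, 1, \ldots, i+j$ produces an arithmetic progression with common difference $\delta - \alpha$ and roughly $i$ terms. I would choose the morphism so that this step is as small as possible --- ideally $|\delta - \alpha| = 1$, or failing that so that $\gcd(\delta - \alpha, \delta - \beta)$ is small --- so that the union over $k$, and then over the three values $j \in \{-1, 0, 1\}$ (which shift the starting point by $\alpha$), fills a \emph{contiguous} block of integers for each fixed $i$. A short interval computation shows that the three $j$-blocks for a given $i$ merge into one interval of the form $[\sigma i - \alpha, \ \sigma i + \alpha + i + 1]$ once $i \ge \alpha$, since each block then has length exceeding the gap $\alpha$ between consecutive starting points.

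Next I would glue these per-$i$ intervals across successive values of $i$. The interval for $i$ ends near $\sigma i + \alpha + i$, while the interval for $i+1$ begins near $\sigma i + \sigma - \alpha$; these overlap precisely when $i \ge \sigma - 2\alpha - 2$. Hence beyond an explicit threshold $I_0 = \max(\alpha, \ \sigma - 2\alpha)$ the intervals chain together with no gaps, covering every integer from $\sigma I_0 - \alpha$ onward. The constraint $3i + j \notin \{5, 7, 9, 10, 14, 17\}$ only forbids finitely many small values of $i$ and is vacuous once $i \ge 6$, so it has no effect in the range of interest; the numerical value $7400$ is then simply $\sigma I_0 - \alpha$, or a convenient round bound just above it, for the chosen morphism.

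The main obstacle I anticipate is the interval bookkeeping itself: ensuring that \emph{no} residue class is missed. If a single family \eqref{length1} leaves periodic gaps --- which happens unless $|\delta - \alpha| = 1$ --- then I must combine it with \eqref{length2} and exploit that $\delta - \alpha$ and $\delta - \beta$ differ, using the second family to patch the classes the first cannot reach. Driving the threshold down to exactly $7400$, rather than some larger bound, is where the specific arithmetic of the chosen $\alpha, \beta, \gamma, \delta$ matters, and is the part most sensitive to the exact morphism produced by the search.
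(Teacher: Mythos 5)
Your proposal is correct and takes essentially the same route as the paper: the authors invoke Theorem~\ref{m values} with a single explicit morphism having $\alpha=\beta=\gamma=50$ and $\delta=51$, so that $\delta-\beta=1$ and your covering argument collapses to writing $m=50\ell+k$ with $0\le k\le 49$ and checking that $k\le i$ once $m\ge 7400$. The more elaborate interval-gluing bookkeeping you describe is only needed when $\alpha,\beta,\gamma$ are not all equal, a complication the paper sidesteps by its choice of morphism.
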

\begin{proof} 

One checks that the morphism $f:T^*\rightarrow S^*$ given by
\begin{eqnarray*}
f(a)&=&01100111000101110010110001011100011001011000111001\\
f(b)&=&01100111000110010110001011100101100011100101110001\\
f(c)&=&01100111000110010111000101100011100101100010111001\\
f(d)&=&011001110001011100101100011100101110001011000111001
\end{eqnarray*}
satisfies the conditions of Theorem~\ref{HN}. We have
$|f(a)|=|f(b)|=|f(c)|=50$, $|f(d)|=51.$
Write $m=50\ell+k$, $0\le k\le 49.$ Write $\ell=3i+j$, $-1\le j\le 1$. Then $\ell=\lfloor m/50\rfloor\ge 148$, so that $i=(\ell-j)/3\ge 147/3=49\ge k$. 
Then the conditions giving a length of form (\ref{length2}) hold, with $\alpha=\beta=\gamma=50$, $\delta=51$, and there is a circular FS word of length 
\begin{eqnarray*}
(\alpha+\beta+\gamma)i+\alpha j+k(\delta-\beta)&=&150i+50j+k\\
&=&150\left({\ell-j\over 3}\right)+50j+m-50\ell\\
&=&50\ell-50j+50j+m-50\ell\\
&=&m.
\end{eqnarray*}
\end{proof}

Several other morphisms satisfying the conditions of Theorem~\ref{HN} are given in Tables~\ref{m1}, \ref{m2}, and \ref{m3}.

\begin{remark}\label{knockout} Choose $r$, $0\le r\le 32$, and consider the morphism $f_r$ 
in Tables~\ref{m1}, \ref{m2}, or \ref{m3}. Let $[\alpha,\beta,\gamma,\delta]$ be a permutation of $[|f_r(a)|,|f_r(b)|,|f_r(c)|,|f_r(d)|]$. Letting $i,j,k$ take on values allowable in Theorem~\ref{m values}, one produces FS words of various lengths. A computer search thus shows that all lengths less than 7400 are obtainable in this way except for 

1, 2, 3, 4, 5, 6, 7, 8, 9, 10, 11, 12, 13, 14, 15, 16, 17, 18, 19, 20, 21, 22, 23, 24, 25, 26, 27, 28, 29, 30, 31, 32, 33, 34, 35, 36, 37, 38, 39, 40, 41, 42, 43, 44, 45, 46, 47, 48, 49, 50, 51, 52, 53, 55, 56, 57, 58, 59, 61, 63, 64, 65, 69, 70, 71, 73, 77, 116, 127, 232, 241, and 253 .
 
 A further computer search finds circular FS words in each of these cases, or shows that no such word exists. Where circular FS words exist, not obtainable via Theorem~\ref{m values} and the morphisms in Tables~\ref{m1}--\ref{m3}, they are listed in Table~\ref{other words}. The lengths for which no circular FS word exists are found to be  
 
 9, 10, 11, 13, 15, 16, 17, 18, 21, 22, 23, 25, 26, 27, 29, 31, 32, 33, 34, 35, 37, 40, 41, 42, 45, 47, 49, 53, 56, 59, 61, 64, and 73.
 \end{remark}

\begin{thm}\label{main} There is a circular FS word of length $m$ exactly when $m$ is a non-negative integer other than 9, 10, 11, 13, 15, 16, 17, 18, 21, 22, 23, 25, 26, 27, 29, 31, 32, 33, 34, 35, 37, 40, 41, 42, 45, 47, 49, 53, 56, 59, 61, 64, and 73.
\end{thm}
\begin{table}
\small
$$\begin{array}{|l|l|l|l|}
\hline
r&x&f_r(x)&|f_r(x)|\\
\hline

0&a&011001110001100101110001&24\\
&b&011001110001100101100010111001&30\\
&c&01100111000101110010110001011100011001011000111001&50\\
&d&011001110001011100101100011100101110001011000111001&51\\
\hline
1&a&110001100101110001011001&24\\
&b&11000110010110001110010110011100010111001011000101&50\\
&c&110001100101100010111001011001110001011000111001011001&54\\
&d&1100011001011000111001011100010110011100010111001011001&55\\
\hline

2&a&110001100101100010111001011001&30\\
&b&11000110010111000101100011100101110001011001&44\\
&c&11000110010110001110010110011100010111001011000101&50\\
&d&110001100101100011100101110001011000111001011000101&51\\
\hline

3&a&0110011100011001011000111001&28\\
&b&0110011100010111001011000101110001100101100010111001&52\\
&c&01100111000110010111000101100011100101110001100101100010111001&62\\
&d&011001110001011000111001011000101110001100101100011100101110001&63\\
\hline

4&a&0101100011100101100111000101110&31\\
&b&010110001110010111000101100111000110&36\\
&c&01011000111001011100011001011000101110010110011100&50\\
&d&010110001110010111000110010110001011100011001011100&51\\
\hline

5&a&010110001110010110011100011001011100&36\\
&b&01011000111001011001110001011100101100010111000110&50\\
&c&010110001110010111000110010110001011100101100111000110&54\\
&d&0101100011100101110001011001110001011100101100111000110&55\\
\hline

6&a&01011000111001011001110001011100101100010111000110&50\\
&b&0101100011100101110001011001110001011100101100111000110&55\\
&c&01011000111001011100011001011000101110010110011100011001011100&62\\
&d&010110001110010111000101100111000110010110001011100011001011100&63\\
\hline

7&a&011001110001011100101100011100101110001&39\\
&b&01100111000110010111000101100011100101110001&44\\
&c&011001110001100101100011100101110001100101100010111001&54\\
&d&0110011100011001011000101110001100101110001011000111001&55\\
\hline

8&a&0101100011100101100111000110&28\\
&b&010110001110010111000101100111000101110010110011100&51\\
&c&01011000111001011100011001011000101110010110011100011001011100&62\\
&d&010110001110010111000101100111000110010110001011100011001011100&63\\
\hline

9&a&110001100101100010111001011001&30\\
&b&110001100101100011100101110001011001&36\\
&c&1100011001011000111001011001110001011000111001011000101&55\\
&d&11000110010111000101100011100101100111000101100011100101&56\\
\hline

10&a&011001110001011100101100011100101110001&39\\
&b&01100111000101110010110001011100011001011000111001&50\\
&c&011001110001100101100011100101110001100101100010111001&54\\
&d&0110011100011001011000101110001100101110001011000111001&55\\
\hline

\end{array}
$$
\normalsize
\caption{Various FS morphisms with lengths}
\label{m1}\end{table}

\begin{table}
\small
$$\begin{array}{|l|l|l|l|}
\hline
r&x&f_r(x)&|f_r(x)|\\
\hline
11&a&110001100101100010111001011001&30\\
&b&11000110010111000101100011100101&32\\
&c&11000110010110001110010110011100010111001011000101&50\\
&d&110001100101100011100101110001011000111001011000101&51\\
\hline

12&a&11000110010111000101100011100101110001011001&44\\
&b&110001100101100011100101110001011000111001011000101&51\\
&c&110001100101100010111001011001110001011000111001011000101&57\\
&d&1100011001011100010110001110010110011100010111001011000101&58\\
\hline

13&a&000111001011001110001100101110001011&36\\
&b&000111001011000101110010110011100011001011&42\\
&c&00011100101110001100101100010111001011001110001011&50\\
&d&000111001011100010110011100010111001011001110001011&51\\
\hline

14&a&000111001011001110001011&24\\
&b&00011100101110001011001110001100101110001011&44\\
&c&0001110010111000101100111000101110010110011100011001011&55\\
&d&0001110010111000110010110001011100011001011100010110011100011001011&67\\
\hline

15&a&110011100011001011100010110001110010111000110010110001110010&60\\
&b&11001110001100101100010111001011000111001011100010110001110010&62\\
&c&11001110001100101100011100101110001011000111001011000101110010&62\\
&d&110011100011001011000101110001100101110001011000111001011100010&63\\
\hline

16&a&01100111000101110010110001011100011001011000111001&50\\
&b&01100111000110010110001011100101100011100101110001&50\\
&c&01100111000110010111000101100011100101100010111001&50\\
&d&011001110001011100101100011100101110001011000111001&51\\
\hline

17&a&110011100011001011100010&24\\
&b&110011100011001011000111001011100010110001110010&48\\
&c&110011100010111001011000111001011100011001011000101110010&57\\
&d&1100111000101110010110001011100011001011100010110001110010&58\\
\hline

18&a&110011100011001011100010&24\\
&b&110011100010111001011000111001011100010110001110010&51\\
&c&110011100011001011000111001011100011001011000101110010&54\\
&d&1100111000110010110001011100011001011100010110001110010&55\\
\hline

19&a&110011100011001011100010&24\\
&b&1100111000101110010110001011100011001011000101110010&52\\
&c&110011100011001011000111001011100011001011000101110010&54\\
&d&1100111000110010110001011100011001011100010110001110010&55\\
\hline

20&a&1100111000110010110001110010&28\\
&b&11001110001011000111001011100010&32\\
&c&110011100010111001011000111001011100010110001110010&51\\
&d&1100111000101110010110001011100011001011000101110010&52\\
\hline

21&a&1100111000110010110001110010&28\\
&b&11001110001100101110001011000111001011100010&44\\
&c&110011100010111001011000111001011100011001011000101110010&57\\
&d&1100111000101110010110001011100011001011000111001011100010&58\\
\hline

\end{array}
$$
\normalsize
\caption{Various FS morphisms with lengths}
\label{m2}\end{table}

\begin{table}
\small
$$\begin{array}{|l|l|l|l|}
\hline
r&x&f_r(x)&|f_r(x)|\\
\hline

22&a&1100111000110010110001110010&28\\
&b&1100111000101110010110001011100011001011100010&46\\
&c&110011100010111001011000111001011100010110001110010&51\\
&d&1100111000101110010110001011100011001011000101110010&52\\
\hline

23&a&1100111000110010110001110010&28\\
&b&11001110001100101110001011000111001011000101110010&50\\
&c&11001110001011000111001011100011001011000111001011100010&56\\
&d&110011100010111001011000111001011100011001011000101110010&57\\
\hline

24&a&1100111000110010110001110010&28\\
&b&11001110001100101110001011000111001011000101110010&50\\
&c&110011100010111001011000111001011100011001011000101110010&57\\
&d&1100111000101110010110001011100011001011000111001011100010&58\\
\hline

25&a&1100111000110010110001110010&28\\
&b&11001110001011000111001011100011001011000111001011100010&56\\
&c&110011100010111001011000111001011100011001011000101110010&57\\
&d&1100111000101110010110001011100011001011100010110001110010&58\\
\hline

26&a&1100111000110010110001110010&28\\
&b&110011100010111001011000111001011100011001011000101110010&57\\
&c&11001110001100101110001011000111001011100011001011000101110010&62\\
&d&110011100010110001110010110001011100011001011000111001011100010&63\\
\hline

27&a&11001110001011000111001011100010&32\\
&b&110011100011001011100010110001110010&36\\
&c&110011100011001011000111001011100011001011000101110010&54\\
&d&1100111000101110010110001110010111000110010110001110010&55\\
\hline

28&a&11001110001011000111001011100010&32\\
&b&11001110001011100101100010111000110010110001110010&50\\
&c&110011100011001011000111001011100011001011000101110010&54\\
&d&1100111000110010110001011100011001011100010110001110010&55\\
\hline

29&a&110011100011001011000111001011100010&36\\
&b&11001110001100101110001011000111001011000101110010&50\\
&c&110011100010111001011000111001011100010110001110010&51\\
&d&1100111000101110010110001011100011001011000101110010&52\\
\hline

30&a&110011100011001011100010110001110010&36\\
&b&11001110001011100101100010111000110010110001110010&50\\
&c&11001110001100101100011100101110001011000111001011000101110010&62\\
&d&110011100011001011000101110001100101110001011000111001011100010&63\\
\hline

31&a&110011100010111001011000111001011100010&39\\
&b&110011100011001011000111001011100010110001110010&48\\
&c&11001110001100101110001011000111001011100011001011000101110010&62\\
&d&110011100011001011000101110001100101110001011000111001011100010&63\\
\hline

32&a&11001110001100101110001011000111001011100010&44\\
&b&1100111000101110010110001011100011001011000101110010&52\\
&c&110011100011001011000111001011100011001011000101110010&54\\
&d&1100111000110010110001011100011001011100010110001110010&55\\
\hline

\end{array}
$$
\normalsize
\caption{Various FS morphisms with lengths}
\label{m3}
\end{table}

\begin{table}
\small
$$\begin{array}{|c|l|}
\hline
|w|&w\\
\hline
1& 0\\\hline
2& 00\\\hline
3& 000\\\hline
4& 0001\\\hline
5& 00011\\\hline
6& 000111\\\hline
7& 0001011\\\hline
8& 00010111\\\hline
12& 000101100111\\\hline
14& 00010111001011\\\hline
19& 0001011100011001011\\\hline
20& 00010110001110010111\\\hline
24& 000101100011100101100111\\\hline
28& 0001100101100011100101100111\\\hline
30& 000101110010110011100011001011\\\hline
36& 000101100011100101100111000110010111\\\hline
38& 00010110001110010110001011100101100111\\\hline
39& 000101100011100101100010111000110010111\\\hline
43& 0001011001110001011100101100111000110010111\\\hline
44& 00010110001110010111000101100111000110010111\\\hline
46& 0001011001110001011100101100010111000110010111\\\hline
48& 000101100011100101100111000110010110001110010111\\\hline
50& 00010110001110010110001011100101100111000110010111\\\hline
51& 000101100011100101100010111000110010110001110010111\\\hline
52& 0001011001110001100101100011100101100111000110010111\\\hline
55& 0001011000111001011000101110001100101100011100101100111\\\hline
57& 000101100011100101100010111000110010110001011100101100111\\\hline
58& 0001011000111001011001110001011100101100010111000110010111\\\hline
63& 000101100011100101110001011001110001011100101100111000110010111\\\hline
65& 00010110001110010110001011100011001011000101110010110001110010111\\\hline
69& 000101100011100101100010111000110010110001011100101100111000110010111\\\hline
70& 0001011000111001011000101110001100101110001011001110001011100101100111\\\hline
71& 00010110011100010111001011001110001100101100011100101100111000110010111\\\hline
77&
00010110001110010110001011100011001011100010110001110010110001011100101100111\\\hline
116& 00010110001110010110001011100011001011000101110010110001110010111000101100011\\
&100101100111000101110010110001110010111\\
\hline
127& 00011100101110001011001110001100101100010111000110010111000101100011100101100\\
&11100010111001011000101110001100101100010111001011\\\hline
232&f_{27}(abdcd)\\\hline
241&f_{29}(abdcd)\\\hline
253&f_{10}(abdcd)\\\hline
\end{array}
$$
\normalsize
\caption{Circular SF words of various lengths}\label{other words}
\end{table}

\end{document}